\documentclass[12pt, reqno]{amsart}
\usepackage{amsmath, amsthm, amscd, amsfonts, amssymb, graphicx, color}
\usepackage[bookmarksnumbered, colorlinks, plainpages]{hyperref}

\setlength{\textwidth}{6.5in} \setlength{\textheight}{8.5in}
\setlength{\evensidemargin}{-0.2in}\setlength{\oddsidemargin}{-0.2in}

\newtheorem{theorem}{Theorem}[section]
\newtheorem{lemma}[theorem]{Lemma}
\newtheorem{proposition}[theorem]{Proposition}
\newtheorem{corollary}[theorem]{Corollary}
\theoremstyle{definition}

\theoremstyle{remark}

\numberwithin{equation}{section}

\begin{document}

\title[Reverse triangle inequality]{Reverse triangle inequality in Hilbert $C^*$-modules}
\author[M.\ Khosravi, H.\ Mahyar, M.S.\ Moslehian]{M.\ Khosravi$^1$, H.\ Mahyar$^2$ and M.\ S.\ Moslehian$^3$}
\address{Maryam Khosravi: Department of Mathematics, Teacher Training
University, Tahran, Iran.} \email{khosravi$_-$m@saba.tmu.ac.ir}
\address{Hakimeh Mahyar: Department of Mathematics, Teacher Training
University, Tahran, Iran.} \email{mahyar@saba.tmu.ac.ir}
\address{Mohammad Sal Moslehian: Department of Pure Mathematics and Centre of Excellence in Analysis on
Algebraic Structures (CEAAS), Ferdowsi University of Mashhad, P. O.
Box 1159, Mashhad 91775, Iran.} \email{moslehian@ferdowsi.um.ac.ir
and moslehian@ams.org} \subjclass[2000]{Primary 46L08; Secondary
15A39, 26D15, 46L05, 51M16.} \keywords{Triangle inequality, Reverse
inequality, Hilbert $C^*$-module, $C^*$-algebra.}

\begin{abstract}
We prove several versions of reverse triangle inequality in Hilbert
$C^*$-modules. We show that if $e_1, \cdots, e_m$ are vectors in a
Hilbert module ${\mathfrak X}$ over a $C^*$-algebra ${\mathfrak A}$
with unit 1 such that $\langle e_i,e_j\rangle=0\,\, (1\leq i\neq j
\leq m)$ and $\|e_i\|=1\,\,(1\leq i\leq m)$, and also
$r_k,\rho_k\in\Bbb{R}\,\,(1\leq k\leq m)$ and $x_1, \cdots, x_n\in
{\mathfrak X}$ satisfy
$$0\leq r_k^2 \|x_j\|\leq {Re}\langle r_ke_k,x_j\rangle\,,\quad0\leq \rho_k^2 \|x_j\|
\leq {Im}\langle \rho_ke_k,x_j\rangle\,,$$ then
\begin{eqnarray*}
\left[\sum_{k=1}^m(r_k^2+\rho_k^2)\right]^{\frac{1}{2}}\sum_{j=1}^n
\|x_j\|\leq\left\|\sum_{j=1}^nx_j\right\|\,,
\end{eqnarray*} and the
equality holds if and only if
\begin{eqnarray*}
\sum_{j=1}^n
x_j=\sum_{j=1}^n\|x_j\|\sum_{k=1}^m(r_k+i\rho_k)e_k\,.
\end{eqnarray*}
\end{abstract}
\maketitle


\section{Introduction and preliminaries}

The triangle inequality is one of the most fundamental inequalities
in mathematics. Several mathematician have been investigated its
generalizations and its reverses.\\
In 1917, Petrovitch \cite{pet} proved that for complex numbers $z_1,
\cdots, z_n$,
$$|\sum_{j=1}^n
z_j|\geq\cos\theta\sum_{j=1}^n|z_j|\,,$$ where
$0<\theta<\frac{\pi}{2}$ and $\alpha-\theta<\mbox{arg~}
z_j<\alpha+\theta \,\,
(1 \leq j \leq n)$ for a given real number $\alpha$.\\
The first generalization of the reverse triangle inequality in
Hilbert spaces was given by Diaz and Matcalf \cite{diaz}. They
proved that for $x_1, \cdots, x_n$ in a Hilbert space $H$, if $e$ is
a unit vector of $H$ such that $0\leq r\leq\frac{{\rm Re}\langle
x_j,e\rangle}{\|x_j\|}$ for some $r\in\mathbb{R}$ and each $1 \leq j
\leq n$, then
$$r\sum_{j=1}^n\|x_j\|\leq\|\sum_{j=1}^n x_j\|\,.$$
Moreover, the equality holds if and only if $\sum_{j=1}^n
x_j=r\sum_{j=1}^n\|x_j\|e$.

Recently, a number of mathematicians have represented several
refinements of the reverse triangle inequality in Hilbert spaces and
normed spaces. See \cite{moslehian, moslehian1, B-D-H-P, DRA1, DRA2,
M-S-T, M-P-F, N-T}. Recently A discussion of $C^*$-valued triangle
inequality in Hilbert $C^*$-modules was given in \cite{A-R}. Our aim is to give some generalizations of
results of Dragomir in Hilbert spaces to the framework of Hilbert
$C^*$-modules. For this purpose, we first recall some fundamental
definitions in the theory of Hilbert $C^*$-modules.

Suppose that ${\mathfrak A}$ is a $C^*$-algebra and ${\mathfrak X}$
is a linear space, which is an algebraic right ${\mathfrak
A}$-module. The space ${\mathfrak X}$ is called a pre-Hilbert
${\mathfrak A}$-module (or an inner product ${\mathfrak A}$-module)
if there exists an ${\mathfrak A}$-valued inner product
$\langle.,.\rangle:{\mathfrak X}\times {\mathfrak X}\to
{\mathfrak A}$ with the following properties:\\
(i) $\langle x,x\rangle\geq0$ and $\langle x,x\rangle=0$ if and
only if $x=0$\\
(ii) $\langle x,\lambda y+z\rangle=\lambda\langle
x,y\rangle+\langle x,z\rangle$\\
(iii) $\langle x,ya\rangle=\langle x,y\rangle a$\\
(iv) $\langle x,y\rangle^*=\langle y,x\rangle$\\
for all $x,y,z\in {\mathfrak X},\ a\in {\mathfrak A},\
\lambda\in\mathbb{C}$. By (ii) and (iv), $\langle.,.\rangle$ is
conjugate linear in the first variable. Using the Cauchy--Schwartz
inequality $\langle y,x\rangle\langle x,y\rangle\leq\|\langle
x,x\rangle\| \langle y,y\rangle$ \cite[Page 5]{lance}, it follows
that $\|x\|=\|\langle x,x\rangle\|^{\frac{1}{2}}$ is a norm on
${\mathfrak X}$ making it into a right normed module. The
pre-Hilbert module ${\mathfrak X}$ is called a Hilbert ${\mathfrak
A}$-module if it is complete with respect to this norm. Notice that
the inner structure of a $C^*$-algebra is essentially more
complicated than complex numbers. For instance, the notations such
as orthogonality and theorems such as Riesz' representation in the
complex Hilbert space theory cannot simply be generalized or
transferred to the theory of Hilbert $C^*$-modules.

One may define an ``${\mathfrak A}$-valued norm'' $|.|$ by
$|x|=\langle x,x\rangle^{1/2}$. Clearly, $\|\,|x|\,\|=\|x\|$ for
each $x\in {\mathfrak X}$. It is known that $|.|$ does not satisfy
the triangle inequality in general. See \cite{lance, M-T} for more
information on Hilbert $C^*$-modules.

We also use the elementary $C^*$-algebra theory, in particular we
utilize this property that if $a \leq b$ then $a^{1/2} \leq
b^{1/2}$, where $a,b$ are positive elements of a $C^*$-algebra
${\mathfrak A}$. We also repeatedly apply the following known
relation:
$$\frac{1}{2}(aa^*+a^*a)=(\mbox{Re}~a)^2 +(\mbox{Im}~a)^2\,,
\eqno{(\diamond)}$$
where $a$ is an arbitrary element of ${\mathfrak
A}$. For details on $C^*$-algebra theory we referred the readers to
\cite{MUR}.

Throughout this paper, we assume that ${\mathfrak A}$ is a unital
$C^*$-algebra with unit $1$ and for every $\lambda\in\Bbb{C}$, we
write $\lambda1$ as $\lambda$.


\section{Multiplicative reverse of the triangle inequality}

Utilizing some $C^*$-algebraic techniques we present our first
result as a generalization of \cite[Theorem 2.3]{DRA1}.


\begin{theorem}\label{1.1}
Let ${\mathfrak A}$ be a $C^*$-algebra, let ${\mathfrak X}$ be a
Hilbert ${\mathfrak A}$-module and let $x_1, \cdots, x_n\in
{\mathfrak X}$. If there exist real numbers $k_1,k_2\geq0$ with
$$0\leq k_1 \|x_j\|\leq {Re}\langle e,x_j\rangle\,,\quad 0\leq k_2 \|x_j\|
\leq {Im}\langle e,x_j\rangle\,,$$ for some $e\in {\mathfrak X}$
with $|e|\leq 1$ and all $1\leq j\leq n$, then
\begin{eqnarray} \label{ine1}
(k_1^2+k_2^2)^{\frac{1}{2}}\sum_{j=1}^n
\|x_j\|\leq\left\|\sum_{j=1}^nx_j\right\|\,.
\end{eqnarray}
\end{theorem}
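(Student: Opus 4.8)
The strategy is to work with the $\mathfrak{A}$-valued inner product and bootstrap the scalar estimate from a $C^*$-algebra inequality applied to $\langle e, \sum_j x_j\rangle$. First I would set $S = \sum_{j=1}^n x_j$ and compute $\langle e, S\rangle = \sum_{j=1}^n \langle e, x_j\rangle \in \mathfrak{A}$. Writing each $\langle e, x_j\rangle = \mathrm{Re}\langle e,x_j\rangle + i\,\mathrm{Im}\langle e,x_j\rangle$ and using the hypotheses $k_1\|x_j\| \le \mathrm{Re}\langle e,x_j\rangle$ and $k_2\|x_j\| \le \mathrm{Im}\langle e,x_j\rangle$ (inequalities between self-adjoint elements of $\mathfrak{A}$, with $k_1\|x_j\|$ meaning the scalar multiple of the unit), I get
\begin{eqnarray*}
\mathrm{Re}\langle e,S\rangle \geq k_1 \sum_{j=1}^n \|x_j\|\,, \qquad \mathrm{Im}\langle e,S\rangle \geq k_2 \sum_{j=1}^n \|x_j\|\,,
\end{eqnarray*}
where both right-hand sides are nonnegative scalar multiples of $1$.

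**Key step: passing to the norm.** Now I want a lower bound on $\|\langle e,S\rangle\|$. Using relation $(\diamond)$ applied to $a = \langle e,S\rangle$, namely $\tfrac12(aa^* + a^*a) = (\mathrm{Re}\,a)^2 + (\mathrm{Im}\,a)^2$, and the fact that if $0 \le c \le d$ for self-adjoint $c,d$ then $0 \le c^2 \le d^2$ is \emph{not} automatic in general — but here $c = k_1\sum_j\|x_j\|\cdot 1$ is a scalar multiple of the unit, so it commutes with everything and the implication $0 \le c \le \mathrm{Re}\,a \Rightarrow c^2 \le (\mathrm{Re}\,a)^2$ does hold (multiply $c \le \mathrm{Re}\,a$ on left and right by the positive commuting element, or use that $t \mapsto t^2$ is operator monotone on the relevant commutative subalgebra). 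Hence
\begin{eqnarray*}
\tfrac12\big(\langle e,S\rangle\langle e,S\rangle^* + \langle e,S\rangle^*\langle e,S\rangle\big) = (\mathrm{Re}\langle e,S\rangle)^2 + (\mathrm{Im}\langle e,S\rangle)^2 \geq (k_1^2 + k_2^2)\Big(\sum_{j=1}^n\|x_j\|\Big)^2 1\,,
\end{eqnarray*}
and taking norms of both sides, together with $\|aa^* + a^*a\| \le 2\|a\|^2$, yields $\|\langle e,S\rangle\|^2 \geq (k_1^2+k_2^2)\big(\sum_j\|x_j\|\big)^2$.

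**Finishing.** It remains to bound $\|\langle e,S\rangle\|$ above by $\|S\|$. By the Cauchy–Schwarz inequality for Hilbert $C^*$-modules quoted in the excerpt, $\langle S,e\rangle\langle e,S\rangle \le \|\langle e,e\rangle\|\,\langle S,S\rangle \le \langle S,S\rangle$ since $\|\langle e,e\rangle\| = \|e\|^2 = \||e|\|^2 \le 1$; taking norms gives $\|\langle e,S\rangle\|^2 = \|\langle S,e\rangle\langle e,S\rangle\| \le \|\langle S,S\rangle\| = \|S\|^2$. Combining with the previous paragraph gives $(k_1^2+k_2^2)\big(\sum_j\|x_j\|\big)^2 \le \|S\|^2$, and taking square roots yields \eqref{ine1}.

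**Main obstacle.** The delicate point is the step where scalar lower bounds for $\mathrm{Re}\langle e,S\rangle$ and $\mathrm{Im}\langle e,S\rangle$ get squared and combined: operator monotonicity of $t\mapsto t^2$ fails in general $C^*$-algebras, so one must exploit that the lower bounds are multiples of the unit (hence central) to justify $c \le b \Rightarrow c^2 \le b^2$ and the additivity of the two squared terms against a scalar. Everything else — the linearity of the inner product, relation $(\diamond)$, and Cauchy–Schwarz — is routine.
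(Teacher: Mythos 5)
Your proposal is correct and follows essentially the same route as the paper: both hinge on relation $(\diamond)$ applied to $a=\langle e,\sum_j x_j\rangle$, the squaring of the scalar (central) lower bounds on $\mathrm{Re}\,a$ and $\mathrm{Im}\,a$, and the module Cauchy--Schwarz inequality to compare with $\|\sum_j x_j\|^2$. The only difference is bookkeeping — the paper sandwiches the algebra element $(\mathrm{Re}\,a)^2+(\mathrm{Im}\,a)^2$ directly between $\|\sum_j x_j\|^2$ and $(k_1^2+k_2^2)(\sum_j\|x_j\|)^2$, whereas you first extract the norm bound $\|a\|^2\ge(k_1^2+k_2^2)(\sum_j\|x_j\|)^2$ and then bound $\|a\|\le\|\sum_j x_j\|$ — which changes nothing of substance.
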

\begin{proof}
Applying the Cauchy--Schwarz inequality, we get
$$|\langle e,\sum_{j=1}^n x_j\rangle|^2\leq
\|e\|^2\left|\sum_{j=1}^nx_j\right|^2 \leq
\left\|\sum_{j=1}^nx_j\right\|^2\,,$$ and
$$|\langle \sum_{j=1}^n
x_j,e\rangle|^2\leq\left\|\sum_{j=1}^nx_j\right\|^2|e|^2 \leq
\left\|\sum_{j=1}^nx_j\right\|^2\,,$$ whence
$$\begin{array}{ll}
\left\|\sum_{j=1}^nx_j\right\|^2&\geq\frac{1}{2}\left(|\langle
e,\sum_{j=1}^nx_j\rangle|^2+|\langle
\sum_{j=1}^nx_j,e\rangle|^2\right)\\
&=\frac{1}{2}\left(\langle e,\sum_{j=1}^nx_j\rangle^*\langle
e,\sum_{j=1}^nx_j\rangle+\langle \sum_{j=1}^nx_j,e\rangle^*\langle
\sum_{j=1}^nx_j,e\rangle\right)\\
&= (\mbox{Re}\langle e,\sum_{j=1}^nx_j\rangle)^2+
(\mbox{Im}\langle e,\sum_{j=1}^nx_j\rangle)^2 \qquad \big({\rm by~} (\diamond)\big)\\
&= (\mbox{Re} \sum_{j=1}^n\langle e,x_j\rangle)^2+
(\mbox{Im} \sum_{j=1}^n\langle e,x_j\rangle)^2\\
&\geq k_1^2(\sum_{j=1}^n\|x_j\|)^2+k_2^2(\sum_{j=1}^n\|x_j\|)^2\\
&=(k_1^2+k_2^2)(\sum_{j=1}^n\|x_j\|)^2\,.
\end{array}$$
\end{proof}

Using the same argument as in the proof of Theorem \ref{1.1} one can
obtain the following result, where $k_1,k_2$ are hermitian elements
of ${\mathfrak A}$.

\begin{theorem}
If the vectors $x_1, \cdots, x_n\in {\mathfrak X}$ satisfy the
conditions
$$0\leq k_1^2 \|x_j\|^2\leq ({Re}\langle e,x_j\rangle)^2\,,\quad 0\leq
k_2^2 \|x_j\|^2 \leq ({Im}\langle e,x_j\rangle)^2\,,$$ for some
hermitian elements $k_1,k_2$ in ${\mathfrak A}$, some
$e\in{\mathfrak X}$ with $|e|\leq 1$ and all $1\leq j\leq n$ then
the inequality {\rm \ref{ine1}} holds.
\end{theorem}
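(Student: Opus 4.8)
The plan is to repeat the proof of Theorem \ref{1.1} word for word as far as it goes without ever using that $k_1,k_2$ are scalars. The estimates that open that proof --- the two applications of the Cauchy--Schwarz inequality, the use of $|e|\le 1$, and the identity $(\diamond)$ --- take place entirely on the ${\mathfrak A}$ side, so they still give
$$\left\|\sum_{j=1}^n x_j\right\|^2 \;\geq\; \left({\rm Re}\,\langle e,\sum_{j=1}^n x_j\rangle\right)^2 + \left({\rm Im}\,\langle e,\sum_{j=1}^n x_j\rangle\right)^2.$$
Since the inner product is additive, ${\rm Re}\,\langle e,\sum_j x_j\rangle=\sum_j{\rm Re}\,\langle e,x_j\rangle$ and likewise for the imaginary part, so the whole statement is reduced to proving
$$\left(\sum_{j=1}^n{\rm Re}\,\langle e,x_j\rangle\right)^2 + \left(\sum_{j=1}^n{\rm Im}\,\langle e,x_j\rangle\right)^2 \;\geq\; (k_1^2+k_2^2)\left(\sum_{j=1}^n\|x_j\|\right)^2,$$
after which, passing to $C^*$-norms (the norm being monotone on positive elements of ${\mathfrak A}$), one reads inequality \eqref{ine1} with the scalar coefficient replaced by $\|(k_1^2+k_2^2)^{1/2}\|=\|k_1^2+k_2^2\|^{1/2}$.

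To feed the hypotheses into this reduction I would first rewrite them. From $k_1^2\|x_j\|^2\le({\rm Re}\,\langle e,x_j\rangle)^2$, that is $(|k_1|\,\|x_j\|)^2\le({\rm Re}\,\langle e,x_j\rangle)^2$ with $|k_1|=(k_1^2)^{1/2}$, the operator monotonicity of the square root recalled in the preliminaries gives $|k_1|\,\|x_j\|\le|{\rm Re}\,\langle e,x_j\rangle|$, and similarly $|k_2|\,\|x_j\|\le|{\rm Im}\,\langle e,x_j\rangle|$. In the situation actually modelled on Theorem \ref{1.1} the elements ${\rm Re}\,\langle e,x_j\rangle$ and ${\rm Im}\,\langle e,x_j\rangle$ are themselves positive, so these become $|k_1|\,\|x_j\|\le{\rm Re}\,\langle e,x_j\rangle$ and $|k_2|\,\|x_j\|\le{\rm Im}\,\langle e,x_j\rangle$; summing over $j$ yields $|k_1|\sum_j\|x_j\|\le\sum_j{\rm Re}\,\langle e,x_j\rangle$ and its imaginary counterpart, and then, taking norms, $\|k_1\|\sum_j\|x_j\|\le\bigl\|\sum_j{\rm Re}\,\langle e,x_j\rangle\bigr\|$ and likewise for $k_2$. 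Substituting these two bounds into the first display recovers \eqref{ine1} with coefficient $\|(k_1^2+k_2^2)^{1/2}\|$.

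The step I expect to need the most care is the combination of the real and the imaginary estimates into the single coefficient $k_1^2+k_2^2$. The naive route is to expand $(\sum_j{\rm Re}\,\langle e,x_j\rangle)^2=\sum_{i,j}{\rm Re}\,\langle e,x_i\rangle\,{\rm Re}\,\langle e,x_j\rangle$ and bound it termwise below by $k_1^2\sum_{i,j}\|x_i\|\,\|x_j\|$, but for $i\neq j$ the cross term ${\rm Re}\,\langle e,x_i\rangle\,{\rm Re}\,\langle e,x_j\rangle$ is not even self-adjoint, and, more seriously, $t\mapsto t^2$ is not operator monotone, so $0\le S\le T$ does not force $S^2\le T^2$ unless $S$ and $T$ commute --- and $|k_1|$ need not commute with $\sum_j{\rm Re}\,\langle e,x_j\rangle$. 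So the real content is in controlling the passage from the scalar coefficient of Theorem \ref{1.1} to an operator coefficient: either work inside a commutative $C^*$-subalgebra containing the relevant elements, or take $C^*$-norms early as above (at the mild cost of the weaker coefficient $\tfrac{1}{\sqrt2}(\|k_1\|^2+\|k_2\|^2)^{1/2}$ unless one keeps the positivity of ${\rm Re}\,\langle e,x_j\rangle$ and ${\rm Im}\,\langle e,x_j\rangle$ that is implicit in Theorem \ref{1.1}).
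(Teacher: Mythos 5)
Your proposal is candid that the argument does not close, and indeed it does not; but the gap you have exposed is real, and it is one the paper leaves open as well, since its entire proof of this statement is the remark that ``the same argument as in the proof of Theorem \ref{1.1}'' applies. You have correctly located the two places where that argument breaks once $k_1,k_2$ are hermitian elements and the hypotheses are squared. First, the squared hypotheses no longer force $\mbox{Re}\langle e,x_j\rangle$ and $\mbox{Im}\langle e,x_j\rangle$ to be positive, and without positivity the statement is simply false: already for ${\mathfrak A}=\Bbb{C}$ take $x_1=e=-x_2$ with $\|e\|=1$, $k_1=1$, $k_2=0$; all hypotheses hold (each side squares to $1$ or $0$), yet the conclusion reads $2\leq 0$. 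So the positivity you describe as ``implicit in Theorem \ref{1.1}'' is a genuinely missing hypothesis here, not a harmless normalization. Second, even after restoring positivity and extracting $|k_1|\,\|x_j\|\leq\mbox{Re}\langle e,x_j\rangle$ by operator monotonicity of the square root, the last step of the proof of Theorem \ref{1.1} squares the summed inequality; that is legitimate there because $k_1\sum_j\|x_j\|$ is a nonnegative scalar (a spectral argument gives $0\leq\lambda\leq a\Rightarrow\lambda^2\leq a^2$), but for a hermitian $k_1$ that need not commute with $\mbox{Re}\langle e,\sum_jx_j\rangle$ the implication $0\leq S\leq T\Rightarrow S^2\leq T^2$ fails, exactly as you say.

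Neither of your two suggested repairs completes the proof. There is in general no commutative $C^*$-subalgebra containing both $|k_1|$ and $\mbox{Re}\langle e,\sum_jx_j\rangle$, so the first option is not available; and taking norms early yields only the coefficient $\max(\|k_1\|,\|k_2\|)$, which is strictly smaller than the claimed $\|(k_1^2+k_2^2)^{1/2}\|=\|k_1^2+k_2^2\|^{1/2}$ whenever neither of $k_1,k_2$ dominates (for $k_1=k_2=1$ one gets $1$ versus $\sqrt{2}$). Note, incidentally, that since the right-hand side of \eqref{ine1} is a scalar multiple of the unit, the asserted order inequality is \emph{equivalent} to the norm inequality with coefficient $\|k_1^2+k_2^2\|^{1/2}$, so the loss in your fallback comes not from passing to norms at the end but from the point at which the hypotheses are inserted. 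In short, what you have written is an accurate diagnosis of why the paper's one-line proof does not carry over, but it is not a proof of the theorem, and without an added positivity assumption on $\mbox{Re}\langle e,x_j\rangle$ and $\mbox{Im}\langle e,x_j\rangle$ the theorem as stated is false.
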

One may observe an integral version of inequality (\ref{ine1}) as
follows:

\begin{corollary}
Suppose that ${\mathfrak X}$ is a Hilbert ${\mathfrak A}$-module and
$f: [a,b] \to {\mathfrak X}$ is strongly measurable such that the
Lebesgue integral $\int_a^b\|f(t)\|dt$ exists and is finite. If
there exist self-adjoint elements $a_1,a_2$ in ${\mathfrak A}$ with
$$a_1^2\|f(t)\|^2\leq \mbox{Re}\langle f(t),e\rangle^2\,,\quad
a_2^2\|f(t)\|^2 \leq \mbox{Im}\langle f(t),e\rangle^2 \quad ({\rm
~a.e.~} t\in[a,b])\,,$$
where $e\in {\mathfrak X}$ with $|e|\leq1$,
then
$$(a_1^2+a_2^2)^{\frac{1}{2}}\int_a^b\|f(t)\|dt\leq\|\int_a^bf(t)dt\|\,.$$
\end{corollary}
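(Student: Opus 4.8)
The plan is to reduce the integral statement to a Riemann-sum limit of the discrete inequality (\ref{ine1}). First I would observe that strong measurability of $f$ together with integrability of $\|f(\cdot)\|$ guarantees that the vector-valued Bochner integral $\int_a^b f(t)\,dt$ exists in ${\mathfrak X}$, and that $\langle \int_a^b f(t)\,dt, e\rangle = \int_a^b \langle f(t), e\rangle\,dt$ since the map $x \mapsto \langle x,e\rangle$ is bounded ${\mathfrak A}$-linear in the relevant sense; this interchange is the key structural fact that lets the hypotheses on $f(t)$ propagate to the integral.

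Next I would verify that the hypotheses pass to the integral directly, without passing through sums. From $a_1^2\|f(t)\|^2 \leq (\mbox{Re}\langle f(t),e\rangle)^2$ a.e.\ and the fact that $a_1$ is self-adjoint so $a_1^2 \geq 0$, taking square roots (using the stated monotonicity of $t\mapsto t^{1/2}$ on positive elements, noting $a_1^2{}^{1/2} = |a_1|$) gives $|a_1|\,\|f(t)\| \leq |\mbox{Re}\langle f(t),e\rangle|$ pointwise. One must be slightly careful since $\mbox{Re}\langle f(t),e\rangle$ need not be positive, but $(\mbox{Re}\langle f(t),e\rangle)^2 = |\mbox{Re}\langle f(t),e\rangle|^2$ always, so the inequality $|a_1|^2\|f(t)\|^2 \leq |\mbox{Re}\langle f(t),e\rangle|^2$ is what we actually have and it is an inequality between squares of positive elements. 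Then mimic the proof of Theorem \ref{1.1}: by Cauchy--Schwarz $|\langle e, \int_a^b f(t)\,dt\rangle|^2 \leq \|e\|^2 |\int_a^b f(t)\,dt|^2 \leq \|\int_a^b f(t)\,dt\|^2$ and symmetrically for $|\langle \int_a^b f(t)\,dt, e\rangle|^2$, so by $(\diamond)$,
$$\left\|\int_a^b f(t)\,dt\right\|^2 \geq \left(\mbox{Re}\,\Big\langle e,\int_a^b f(t)\,dt\Big\rangle\right)^2 + \left(\mbox{Im}\,\Big\langle e,\int_a^b f(t)\,dt\Big\rangle\right)^2.$$

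The remaining point is to bound each term on the right below. We have $\mbox{Re}\langle e, \int_a^b f(t)\,dt\rangle = \int_a^b \mbox{Re}\langle e, f(t)\rangle\,dt$, a self-adjoint element obtained as a Bochner integral of self-adjoint elements. I would argue that for a continuous linear functional $\varphi$ on ${\mathfrak A}$ that is positive, $\varphi$ commutes with the integral, and combine this with the operator inequality: since $|\mbox{Re}\langle e,f(t)\rangle| \geq |a_1|\,\|f(t)\|$ and the real parts in question are (by the running sign convention carried from the discrete case, where one in fact has $0 \le k_1\|x_j\| \le \mbox{Re}\langle e,x_j\rangle$) nonnegative, integrate the inequality $|a_1|\,\|f(t)\| \le \mbox{Re}\langle e,f(t)\rangle$ to get $|a_1|\int_a^b\|f(t)\|\,dt \le \mbox{Re}\langle e,\int_a^b f(t)\,dt\rangle$, hence squaring, $a_1^2\big(\int_a^b\|f(t)\|\,dt\big)^2 \le \big(\mbox{Re}\langle e,\int_a^b f(t)\,dt\rangle\big)^2$; likewise for $a_2$. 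Adding and taking square roots yields $(a_1^2+a_2^2)^{1/2}\int_a^b\|f(t)\|\,dt \le \|\int_a^b f(t)\,dt\|$, as claimed.

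The main obstacle I anticipate is purely the measure-theoretic plumbing rather than any inequality: justifying that the Bochner integral of $f$ exists in the Hilbert module, that $\langle\cdot,e\rangle$ and positive functionals pull through it, and that the pointwise a.e.\ operator inequalities survive integration (which requires knowing $\mbox{Re}\langle e, f(t)\rangle \ge 0$ a.e., not merely $(\mbox{Re}\langle e,f(t)\rangle)^2 \ge a_1^2\|f(t)\|^2$ — so one should either read the hypothesis as implicitly including this sign condition, as in the discrete Diaz--Metcalf setup, or else invoke that a positive element dominating $a_1^2\|f(t)\|^2$ in square can be integrated against positive functionals and the resulting scalar inequalities reassembled). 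Modulo that, the argument is a line-by-line transcription of the proof of Theorem \ref{1.1} with $\sum_{j=1}^n$ replaced by $\int_a^b$.
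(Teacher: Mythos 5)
The paper offers no proof of this corollary at all --- it is presented as something ``one may observe'' from Theorem \ref{1.1} and its hermitian-coefficient variant by replacing sums with integrals --- so your plan of transcribing the proof of Theorem \ref{1.1} with $\int_a^b$ in place of $\sum_{j=1}^n$ is exactly the intended route, and you have correctly isolated the genuinely delicate points: the Bochner-integral plumbing (existence of $\int_a^b f$, pulling $\langle\cdot\,,e\rangle$ through the integral) and the fact that the squared hypothesis does not by itself give a sign on $\mbox{Re}\langle e,f(t)\rangle$.

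There is, however, one gap you pass over with the words ``hence squaring.'' From
$$0\leq |a_1|\int_a^b\|f(t)\|\,dt\leq \mbox{Re}\Big\langle e,\int_a^bf(t)\,dt\Big\rangle$$
one cannot in general conclude
$$a_1^2\Big(\int_a^b\|f(t)\|\,dt\Big)^2\leq \Big(\mbox{Re}\Big\langle e,\int_a^bf(t)\,dt\Big\rangle\Big)^2\,,$$
because $t\mapsto t^2$ is not operator monotone: $0\leq b\leq a$ does not imply $b^2\leq a^2$ unless $a$ and $b$ commute (in $M_2(\mathbb{C})$ take $b=\mathrm{diag}(1,0)$ and $a=b+p$ with $p$ the rank-one projection onto the span of $(1,1)$; then $a^2-b^2$ has negative determinant). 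Here $|a_1|$ is a genuine self-adjoint element of ${\mathfrak A}$ with no reason to commute with $\mbox{Re}\langle e,\int_a^bf(t)\,dt\rangle$, so the step fails as stated. To be fair, this defect is not one you introduced: the same issue is latent in the paper's own hermitian-coefficient theorem as soon as $n>1$, since there too one must take square roots of the hypothesis, sum, and re-square. The argument is airtight when $a_1,a_2$ are scalars, as in Theorem \ref{1.1}, because $a\geq\lambda 1\geq 0$ does force $a^2\geq\lambda^2 1$ by spectral mapping; for general self-adjoint $a_1,a_2$ one needs either a commutativity assumption (e.g.\ $a_1,a_2$ commuting with $\langle e,f(t)\rangle$ for a.e.\ $t$, as in the commutative-ideal examples the paper later considers) or a reformulation that never leaves the squared form.
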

Now we prove a useful lemma, which is frequently applied in the next
theorems (see also \cite{A-R}).
\begin{lemma}\label{Cauchy}
Let ${\mathfrak X}$ be a Hilbert ${\mathfrak A}$-module  and let
$x,y\in{\mathfrak X}$. If $|\langle x,y\rangle|=\|x\|\|y\|$, then
$$y=\frac{x\langle x,y\rangle}{\|x\|^2}\,.$$
\end{lemma}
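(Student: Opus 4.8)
The plan is to analyze the equality case in the Cauchy--Schwarz inequality $|\langle x,y\rangle|\le\|x\|\|y\|$ for Hilbert $C^*$-modules, which follows from the module version $\langle y,x\rangle\langle x,y\rangle\le\|\langle x,x\rangle\|\langle y,y\rangle$ recalled in the introduction. First I would dispose of the trivial case $x=0$: then $\|x\|\|y\|=0$ forces $\langle x,y\rangle=0$ as well, and the claimed formula $y = x\langle x,y\rangle/\|x\|^2$ is vacuous (one should tacitly assume $x\neq 0$, as the statement implicitly does by writing $\|x\|^2$ in a denominator). So assume $x\neq 0$ and set $z = y - \dfrac{x\langle x,y\rangle}{\|x\|^2}$; the goal is to show $z=0$, equivalently $\langle z,z\rangle = 0$ by axiom (i).

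The key computation is to expand $\langle z,z\rangle$ using conjugate-linearity in the first variable and $\mathfrak A$-linearity in the second. Writing $a=\langle x,y\rangle$, one gets
\[
\langle z,z\rangle = \langle y,y\rangle - \frac{a^*\langle x,y\rangle}{\|x\|^2} - \frac{\langle y,x\rangle a}{\|x\|^2} + \frac{a^*\langle x,x\rangle a}{\|x\|^4} = \langle y,y\rangle - \frac{a^*a}{\|x\|^2} + \frac{a^*\langle x,x\rangle a}{\|x\|^4}.
\]
Here I have used $\langle x,y\rangle = a$, $\langle y,x\rangle = a^*$, and property (iii) to pull scalars out. Now the hypothesis gives $a^*a = \langle y,x\rangle\langle x,y\rangle$ with $\|a^*a\| = \|a\|^2 = \|x\|^2\|y\|^2$. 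The Cauchy--Schwarz inequality of the module says $a^*a \le \|\langle x,x\rangle\|\,\langle y,y\rangle = \|x\|^2\langle y,y\rangle$, hence $\dfrac{a^*a}{\|x\|^2}\le \langle y,y\rangle$, so the first two terms contribute a negative element. For the last term, $\langle x,x\rangle \le \|\langle x,x\rangle\|\cdot 1 = \|x\|^2$ as a positive element bounded by its norm times the unit, and conjugating by $a$ (which preserves the order: $c\le d \Rightarrow a^*ca\le a^*da$) gives $a^*\langle x,x\rangle a \le \|x\|^2 a^*a$, i.e. $\dfrac{a^*\langle x,x\rangle a}{\|x\|^4}\le \dfrac{a^*a}{\|x\|^2}$. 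Combining, $\langle z,z\rangle \le \langle y,y\rangle - \dfrac{a^*a}{\|x\|^2} + \dfrac{a^*a}{\|x\|^2} = \langle y,y\rangle$; this is only an upper bound and does not yet pin down $\langle z,z\rangle$.

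To get $\langle z,z\rangle=0$ I would instead argue more carefully that both ``slack'' terms vanish. Observe first that $\langle x,z\rangle = \langle x,y\rangle - \dfrac{\langle x,x\rangle\langle x,y\rangle}{\|x\|^2} = \Bigl(1 - \dfrac{\langle x,x\rangle}{\|x\|^2}\Bigr)a$. Then $\langle z,z\rangle = \langle z,y\rangle - \dfrac{\langle z,x\rangle a}{\|x\|^2} = \langle z,y\rangle - \dfrac{a^*(1-\langle x,x\rangle/\|x\|^2)a}{\|x\|^2}$, and also $\langle z,y\rangle = \langle y,y\rangle - \dfrac{a^*a}{\|x\|^2}$. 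The plan is to show $\langle z,y\rangle = 0$, whence applying Cauchy--Schwarz to the pair $(z,y)$ gives $\langle z,z\rangle\langle ?, ?\rangle$... more directly: once $\langle z,y\rangle = 0$ and $\langle z,x\rangle = (1-\langle x,x\rangle/\|x\|^2)a$, we would get $\langle z,z\rangle = -\dfrac{a^*(1-\langle x,x\rangle/\|x\|^2)a}{\|x\|^2}$, a \emph{negative} element by the order facts above, forcing $\langle z,z\rangle=0$ by axiom (i), and then $z=0$. The main obstacle, and the crux of the argument, is therefore establishing $\langle z,y\rangle=0$, i.e. $\dfrac{a^*a}{\|x\|^2} = \langle y,y\rangle$: this is where the full strength of the equality hypothesis $\|a\| = \|x\|\|y\|$ must be used, via the fact that $a^*a\le \|x\|^2\langle y,y\rangle$ together with $\|a^*a\| = \|x\|^2\|y\|^2 = \|\|x\|^2\langle y,y\rangle\|$, and the observation that if $0\le c\le d$ with $\|c\|=\|d\|$ then... (this needs care, since it is false in general $C^*$-algebras that $0\le c\le d$ and $\|c\|=\|d\|$ imply $c=d$). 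I expect the correct route is: apply the scalar Cauchy--Schwarz-type inequality state-wise or use $\|\langle z,y\rangle\|^2 = \|\langle y,z\rangle\langle z,y\rangle\| \le \|\langle z,z\rangle\|\,\|\langle y,y\rangle\|$ and bound $\|\langle z,z\rangle\|$, closing a loop; I would iterate the Cauchy--Schwarz inequality on $(z,y)$ and on $(z,x)$ and combine with the norm equality to force all slack to zero. This bootstrap is the delicate step, and getting the $C^*$-order subtleties right (bounded elements, conjugation, norms of positive elements) is where the real work lies.
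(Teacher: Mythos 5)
Your proof does not close, and the gap traces to two concrete problems. First, an arithmetic slip: in the expansion of $\langle z,z\rangle$ the two cross terms are $-\frac{a^*\langle x,y\rangle}{\|x\|^2}$ and $-\frac{\langle y,x\rangle a}{\|x\|^2}$, each equal to $-\frac{a^*a}{\|x\|^2}$, so together they contribute $-\frac{2a^*a}{\|x\|^2}$, not $-\frac{a^*a}{\|x\|^2}$ as you wrote. With the correct coefficient, your own first computation already finishes: $\langle z,z\rangle \le \langle y,y\rangle - \frac{2a^*a}{\|x\|^2} + \frac{a^*a}{\|x\|^2} = \langle y,y\rangle - \frac{a^*a}{\|x\|^2}$, and this is exactly the route the paper takes. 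Second, and more seriously, you read the hypothesis as the norm equality $\|\langle x,y\rangle\|=\|x\|\|y\|$; but the lemma asserts the \emph{element} equality $|\langle x,y\rangle|=\|x\|\|y\|\cdot 1$ in $\mathfrak A$, so that $a^*a=|\langle x,y\rangle|^2=\|x\|^2\|y\|^2\cdot 1$ is a scalar. Plugging this into the corrected bound gives $\langle z,z\rangle\le \langle y,y\rangle-\|y\|^2\le 0$, hence $\langle z,z\rangle=0$ and $z=0$, with no bootstrap needed. Under your weaker norm-only reading the statement is actually false: take $\mathfrak A=\mathfrak X=\mathbb C^2$, $x=(1,\tfrac12)$, $y=(1,1)$; then $\|\langle x,y\rangle\|=1=\|x\|\|y\|$ but $x\langle x,y\rangle/\|x\|^2=(1,\tfrac14)\neq y$. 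So the ``delicate bootstrap'' you defer to at the end cannot exist, and as written your argument correctly identifies the crux ($\langle z,y\rangle=0$) but never establishes it; the proof is incomplete.
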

\begin{proof}
For $x,y \in {\mathfrak X}$ we have
$$\begin{array}{ll}
0\leq \Big|y-\frac{x\langle x,y\rangle} {\|x\|^2}\Big|^2 &=\langle
y-\frac{x\langle x,y\rangle}{\|x\|^2},y-\frac{x\langle
x,y\rangle}{\|x\|^2}\rangle\\[2mm]
&=\langle y,y\rangle-\frac{1}{\|x\|^2}\langle y,x\rangle\langle
x,y\rangle+\frac{1}{\|x\|^4}\langle y,x\rangle\langle
x,x\rangle\langle x,y\rangle-\frac{1}{\|x\|^2}\langle
y,x\rangle\langle x,y\rangle\\[2mm]
&\leq|y|^2-\frac{1}{\|x\|^2}|\langle
x,y\rangle|^2=|y|^2-\frac{1}{\|x\|^2}\|x\|^2\|y\|^2\\[2mm]
&=|y|^2-\|y\|^2\leq0\,,
\end{array}$$
whence $\Big|y-\frac{x\langle x,y\rangle} {\|x\|^2}\Big|=0$. Hence
$y=\frac{x\langle x,y\rangle} {\|x\|^2}$.
\end{proof}
Using the Cauchy--Schwarz inequality, we have the following theorem
for Hilbert modules, which is similar to \cite[Theorem
2.5]{moslehian}.
\begin{theorem}
Let $e_1, \cdots, e_m$ be a family of vectors in a Hilbert module
${\mathfrak X}$ over a $C^*$-algebra ${\mathfrak A}$ such that
$\langle e_i,e_j\rangle=0\,\, (1\leq i\neq j\leq m)$ and
$\|e_i\|=1\,\, (1\leq i\leq m)$. Suppose that
$r_k,\rho_k\in\Bbb{R}\,\, (1\leq k\leq m)$ and that the vectors
$x_1, \cdots, x_n\in {\mathfrak X}$ satisfy
$$0\leq r_k^2 \|x_j\|\leq {Re}\langle r_ke_k,x_j\rangle\,,\quad0\leq \rho_k^2 \|x_j\|
\leq {Im}\langle \rho_ke_k,x_j\rangle\,,$$ Then
\begin{eqnarray} \label{ine1'}
\left[\sum_{k=1}^m(r_k^2+\rho_k^2)\right]^{\frac{1}{2}}\sum_{j=1}^n
\|x_j\|\leq\left\|\sum_{j=1}^nx_j\right\|\,,\end{eqnarray} and the
equality holds if and only if
\begin{eqnarray} \label{eq1'}
\sum_{j=1}^n
x_j=\sum_{j=1}^n\|x_j\|\sum_{k=1}^m(r_k+i\rho_k)e_k\,.
\end{eqnarray}
\end{theorem}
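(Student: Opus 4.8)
The plan is to mimic the chain of inequalities in the proof of Theorem \ref{1.1}, but now tracking the contribution of each $e_k$ separately and exploiting orthogonality. Put $S=\sum_{j=1}^n x_j$ and $T=\sum_{j=1}^n\|x_j\|$. First I would observe that for each fixed $k$ the hypotheses say precisely that the pair $(r_k e_k, S)$ satisfies the assumptions of Theorem \ref{1.1} with constants $(r_k^2/\|r_ke_k\|,\dots)$—more carefully, one checks $|r_k e_k|\le |r_k|$, so after normalising one gets from the argument of Theorem \ref{1.1} that
\begin{eqnarray*}
\big({\rm Re}\langle r_ke_k,S\rangle\big)^2+\big({\rm Im}\langle \rho_ke_k,S\rangle\big)^2\ \le\ \tfrac12\big(|\langle r_ke_k,S\rangle|^2+|\langle S,r_ke_k\rangle|^2\big),
\end{eqnarray*}
which is just $(\diamond)$ applied to $a=\langle r_ke_k,S\rangle$ together with the elementary fact that ${\rm Im}\langle\rho_ke_k,S\rangle=(\rho_k/r_k){\rm Im}\langle r_ke_k,S\rangle$ when $r_k\neq0$ (the degenerate cases $r_k=0$ or $\rho_k=0$ are handled by inspection). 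Summing the displayed hypotheses over $j$ gives $r_k^2T\le{\rm Re}\langle r_ke_k,S\rangle$ and $\rho_k^2T\le{\rm Im}\langle\rho_ke_k,S\rangle$, hence
\begin{eqnarray*}
(r_k^2+\rho_k^2)^2T^2\ \le\ \big({\rm Re}\langle r_ke_k,S\rangle\big)^2+\big({\rm Im}\langle\rho_ke_k,S\rangle\big)^2\ \le\ \tfrac12\big(|\langle r_ke_k,S\rangle|^2+|\langle S,r_ke_k\rangle|^2\big).
\end{eqnarray*}

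Next I would invoke orthogonality of the $e_k$ to add up these $m$ estimates. Consider the vector $f=\sum_{k=1}^m(r_k+i\rho_k)e_k$. Since $\langle e_i,e_j\rangle=0$ for $i\neq j$ and $\|e_k\|=1$, a direct computation gives $\langle f,f\rangle=\sum_k(r_k^2+\rho_k^2)\langle e_k,e_k\rangle$, and since the $\langle e_k,e_k\rangle$ are mutually orthogonal positive elements this yields $\|f\|^2=\sum_k(r_k^2+\rho_k^2)$. The key link is that, again by orthogonality, $\langle f,S\rangle=\sum_k\langle(r_k+i\rho_k)e_k,S\rangle=\sum_k(r_k-i\rho_k)\langle e_k,S\rangle$, whose real and imaginary parts dominate $\big(\sum_k r_k^2\big)T$ and $\big(\sum_k\rho_k^2\big)T$ respectively—so in fact ${\rm Re}\langle f,S\rangle\ge\big(\sum_k(r_k^2+\rho_k^2)\big)T=\|f\|^2T$. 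Now Cauchy--Schwarz for the module, $|\langle f,S\rangle|^2\le\|f\|^2|S|^2\le\|f\|^2\|S\|^2$, combined with $|\langle f,S\rangle|\ge{\rm Re}\langle f,S\rangle\ge\|f\|^2T$, gives $\|f\|^4T^2\le\|f\|^2\|S\|^2$, i.e. $\|f\|\,T\le\|S\|$, which is exactly \eqref{ine1'}.

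For the equality case, equality in \eqref{ine1'} forces equality throughout: in particular $|\langle f,S\rangle|=\|f\|\|S\|$, so Lemma \ref{Cauchy} (with $x=f$, $y=S$) gives $S=\dfrac{f\langle f,S\rangle}{\|f\|^2}$. It then remains to identify $\langle f,S\rangle$. Equality also forces ${\rm Im}\langle f,S\rangle=0$ (the imaginary part was dropped when passing to ${\rm Re}\langle f,S\rangle\le|\langle f,S\rangle|$, wait—more precisely $|\langle f,S\rangle|=\|f\|\|S\|=\|f\|^2T={\rm Re}\langle f,S\rangle$ forces $\langle f,S\rangle={\rm Re}\langle f,S\rangle$ is a nonnegative real), hence $\langle f,S\rangle=\|f\|^2T=\big(\sum_k(r_k^2+\rho_k^2)\big)T$, and substituting back yields $S=fT=T\sum_k(r_k+i\rho_k)e_k$, which is \eqref{eq1'}. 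Conversely, plugging \eqref{eq1'} into both sides and using $\|f\|^2=\sum_k(r_k^2+\rho_k^2)$ verifies equality directly. I expect the main obstacle to be the equality analysis: one must be careful that "equality in \eqref{ine1'}" really propagates back up through the Cauchy--Schwarz step and the real-part estimate to pin down $\langle f,S\rangle$ exactly (rather than just up to modulus), and that Lemma \ref{Cauchy} is applicable—this needs $\|f\|\neq0$, i.e. not all $r_k,\rho_k$ vanish, a case one should dispatch separately.
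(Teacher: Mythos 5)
Your operative argument (the second and third paragraphs) follows the paper's proof in outline: form $f=\sum_{k=1}^m(r_k+i\rho_k)e_k$, show that ${\rm Re}\langle f,S\rangle$ dominates $\big(\sum_k(r_k^2+\rho_k^2)\big)T$ (with your notation $S=\sum_jx_j$, $T=\sum_j\|x_j\|$), apply Cauchy--Schwarz, and settle the equality case via Lemma \ref{Cauchy}. The genuine gap is the step $|\langle f,S\rangle|\ge{\rm Re}\langle f,S\rangle$: this is a false operator inequality in a noncommutative $C^*$-algebra. For $a=\left(\begin{smallmatrix}0&1\\0&0\end{smallmatrix}\right)$ in $M_2(\mathbb{C})$ one has $|a|=\left(\begin{smallmatrix}0&0\\0&1\end{smallmatrix}\right)$ and ${\rm Re}\,a=\frac12\left(\begin{smallmatrix}0&1\\1&0\end{smallmatrix}\right)$, and $|a|-{\rm Re}\,a$ has determinant $-\frac14$, so it is not positive. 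This is exactly the trap the paper's proof is organized to avoid: it squares first and invokes $(\diamond)$, using the valid inequality $({\rm Re}\,a)^2\le({\rm Re}\,a)^2+({\rm Im}\,a)^2=\frac12(aa^*+a^*a)=\frac12(|a^*|^2+|a|^2)$, and only then applies Cauchy--Schwarz to $|a|^2$ and $|a^*|^2$. Your step is repairable without changing the architecture, by passing to norms instead of operator order: from the scalar bound $\big(\sum_k(r_k^2+\rho_k^2)\big)T\le{\rm Re}\langle f,S\rangle$ one gets $\big(\sum_k(r_k^2+\rho_k^2)\big)T\le\|{\rm Re}\langle f,S\rangle\|\le\|\langle f,S\rangle\|\le\|f\|\,\|S\|$. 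But as written, the central inequality of your proof rests on a false statement, and the same issue resurfaces in your equality analysis where you again compare $|\langle f,S\rangle|$ with ${\rm Re}\langle f,S\rangle$ elementwise.

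Two further points. The displayed per-$k$ estimate in your first paragraph, $({\rm Re}\langle r_ke_k,S\rangle)^2+({\rm Im}\langle\rho_ke_k,S\rangle)^2\le\frac12\big(|\langle r_ke_k,S\rangle|^2+|\langle S,r_ke_k\rangle|^2\big)$, is also false in general: by $(\diamond)$ the right-hand side equals $({\rm Re}\langle r_ke_k,S\rangle)^2+({\rm Im}\langle r_ke_k,S\rangle)^2$, while $({\rm Im}\langle\rho_ke_k,S\rangle)^2=(\rho_k/r_k)^2({\rm Im}\langle r_ke_k,S\rangle)^2$ exceeds $({\rm Im}\langle r_ke_k,S\rangle)^2$ whenever $|\rho_k|>|r_k|$. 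Since your later argument never uses these per-$k$ estimates (the paper likewise works with the full sums over $k$ from the start), this paragraph is dead weight and should simply be removed. Finally, your justification of $\|f\|^2=\sum_k(r_k^2+\rho_k^2)$ --- that the $\langle e_k,e_k\rangle$ are ``mutually orthogonal positive elements'' --- does not follow from $\langle e_i,e_j\rangle=0$; what is automatic is only $\|f\|^2=\big\|\sum_k(r_k^2+\rho_k^2)\langle e_k,e_k\rangle\big\|\le\sum_k(r_k^2+\rho_k^2)$. The paper asserts the same equality, and the inequality $\le$ is all that the forward implication actually requires, but your stated reason for it is not valid.
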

\begin{proof}
There is nothing to prove if $\sum_{k=1}^m(r_k^2+\rho_k^2)=0$.
Assume that $\sum_{k=1}^m(r_k^2+\rho_k^2)\neq0$. From the hypothesis
we have
$$\begin{array}{ll}
\big(\sum_{k=1}^m(r_k^2+\rho_k^2)\big)^2\big(\sum_{j=1}^n\|x_j\|\big)^2
&\leq \big(\mbox{Re}\langle\sum_{k=1}^m r_ke_k,\sum_{j=1}^n
x_j\rangle+
\mbox{Im}\langle\sum_{k=1}^m \rho_ke_k,\sum_{j=1}^n x_j\rangle\big)^2\\
&=\big(\mbox{Re}\langle\sum_{j=1}^n x_j,\sum_{k=1}^m
(r_k+i\rho_k)e_k\rangle\big)^2 \\
&\quad\quad \Big({\rm by~} \mbox{Im}(a)=\mbox{Re}(ia^*), \mbox{Re}(a^*)=\mbox{Re}(a) \quad (a \in {\mathfrak A})\Big)\\
&\leq\frac{1}{2}|\langle\sum_{j=1}^n x_j,\sum_{k=1}^m
(r_k+i\rho_k)e_k\rangle|^2\\
&\quad+
|\langle\sum_{k=1}^m (r_k+i\rho_k)e_k,\sum_{j=1}^n x_j\rangle|^2 \qquad \big({\rm by~} (\diamond)\big)\\
&\leq\frac{1}{2}\|\sum_{j=1}^n x_j\|^2|\sum_{k=1}^m
(r_k+i\rho_k)e_k|^2\\
&\quad+
\|\sum_{k=1}^m (r_k+i\rho_k)e_k\|^2|\sum_{j=1}^n x_j|^2\\
&\leq \|\sum_{j=1}^n x_j\|^2\left \|\sum_{k=1}^m(r_k+i\rho_k)e_k\right\|^2\\
&\quad\quad \big({\rm by~} |a| \leq \|a\|\quad (a\in {\mathfrak A})\big)\\
&=\|\sum_{j=1}^n x_j\|^2 \left\|\langle\sum_{k=1}^m
(r_k+i\rho_k)e_k,\sum_{k=1}^m(r_k+i\rho_k)e_k\rangle\right\|\\
&= \|\sum_{j=1}^n x_j\|^2\sum_{k=1}^m
|r_k+i\rho_k|^2\|e_k\|^2\\
&=\|\sum_{j=1}^n x_j\|^2\sum_{k=1}^m (r_k^2+\rho_k^2)\,.
\end{array}$$
Hence
$$\left[\sum_{k=1}^m(r_k^2+\rho_k^2)\right](\sum_{j=1}^n
\|x_j\|)^2\leq\left\|\sum_{j=1}^nx_j\right\|^2\,.$$ By taking
square roots the desired result follows.

Clearly we have equality in \ref{ine1'} if condition \ref{eq1'}
holds. To see the converse, first note that if equality holds in
\ref{ine1'}, then all inequalities in the relations above should be
equality. Therefore
$$r_k^2 \|x_j\|={Re}\langle
r_ke_k,x_j \rangle\,,\quad \rho_k^2 \|x_j\| = {Im}\langle
\rho_ke_k,x_j\rangle\,,$$
$$\mbox{Re}\langle\sum_{j=1}^n x_j,
\sum_{k=1}^m (r_k+i\rho_k)e_k\rangle=\langle\sum_{j=1}^n
x_j,\sum_{k=1}^m (r_k+i\rho_k)e_k\rangle\,,$$ and also
$$|\langle\sum_{k=1}^m (r_k+i\rho_k)e_k, \sum_{j=1}^n x_j\rangle|=
\|\sum_{j=1}^n x_j\|\|\sum_{k=1}^m (r_k+i\rho_k)e_k\|\,.$$ From
Lemma \ref{Cauchy} and these equalities we have
$$\begin{array}{ll}
\sum_{j=1}^n x_j&=\frac{\sum_{k=1}^m
(r_k+i\rho_k)e_k}{\|\sum_{k=1}^m
(r_k+i\rho_k)e_k\|^2}\langle\sum_{k=1}^m (r_k+i\rho_k)e_k,
\sum_{j=1}^n x_j\rangle\\[3mm]
&=\frac{\sum_{k=1}^m (r_k+i\rho_k)e_k}{\sum_{k=1}^m
(r_k^2+\rho_k^2)}\mbox{Re}\langle\sum_{k=1}^m (r_k+i\rho_k)e_k,
\sum_{j=1}^n x_j\rangle\\[3mm]
&=\frac{\sum_{k=1}^m (r_k+i\rho_k)e_k}{\sum_{k=1}^m
(r_k^2+\rho_k^2)}\sum_{k=1}^m
\sum_{j=1}^n (r_k^2\|x_j\|+\rho_k^2\|x_j\|)\\[3mm]
&=\sum_{j=1}^n\|x_j\|\sum_{k=1}^m(r_k+i\rho_k)e_k\,,
\end{array}$$
which is the desired result.
\end{proof}
In the next results of this section, we assume that ${\mathfrak
X}$ is a right Hilbert ${\mathfrak A}$-module, which is an
algebraic left $A$-module subject to
$$\langle x,ay\rangle=a\langle x,y\rangle \quad(x,y \in {\mathfrak
X}, a \in {\mathfrak A})\,.\eqno(\dagger)$$ For example if
${\mathfrak A}$ is a unital C$^*$-algebra and ${\mathfrak I}$ is a
commutative right ideal of ${\mathfrak A}$, then ${\mathfrak I}$ is
a right Hilbert module over ${\mathfrak A}$ and
$$\langle x,ay\rangle=x^*(ay)=ax^*y=a\langle x,y\rangle\quad
(x,y\in{\mathfrak I},\ a\in{\mathfrak A})\,.$$ The next theorem is
a refinement of \cite[Theorem 2.1]{DRA1}. To prove it we need the
following lemma.
\begin{lemma}
Let ${\mathfrak X}$ be a Hilbert ${\mathfrak A}$-module and $e_1,
\cdots, e_n\in {\mathfrak X}$ be a family of vectors such that
$\langle e_i,e_j\rangle=0\,\,(i\neq j)$ and $\|e_i\|=1$. If $x\in
{\mathfrak X}$, then
$$|x|^2\geq\sum_{k=1}^n|\langle e_k,x\rangle|^2\quad \mbox{and}
\quad|x|^2\geq\sum_{k=1}^n|\langle x,e_k\rangle|^2\,.$$
\end{lemma}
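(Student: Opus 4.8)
The plan is to establish the Bessel-type inequality $|x|^2 \geq \sum_{k=1}^n |\langle e_k, x\rangle|^2$ by expanding a suitable nonnegative quantity, exactly as in the classical Hilbert space argument, but carrying out the manipulations at the level of the $\mathfrak{A}$-valued inner product. The natural candidate is the module element $x - \sum_{k=1}^n e_k \langle e_k, x\rangle$ (using the left-module structure where available, or more safely $x - \sum_k e_k a_k$ with $a_k = \langle e_k, x\rangle \in \mathfrak{A}$, which makes sense in any right Hilbert module since $e_k a_k \in \mathfrak{X}$ via the right action — wait, here $a_k$ multiplies on the left of $e_k$, so I should write $\sum_k e_k \langle e_k, x\rangle$ only if the left action $(\dagger)$ is in force; since this lemma is stated before that standing assumption, I would instead use $y = x - \sum_{k=1}^n e_k\, b_k$ with $b_k = \langle e_k, x\rangle$ interpreted via the \emph{right} action, i.e. the element $e_k b_k \in \mathfrak{X}$, which is always legitimate).

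First I would compute $\langle y, y\rangle \geq 0$ by bilinearity and the relations $\langle e_i, e_j\rangle = \delta_{ij}\langle e_i,e_i\rangle$ — but note the hypothesis only gives $\langle e_i, e_j\rangle = 0$ for $i\neq j$ and $\|e_i\|=1$, not $\langle e_i,e_i\rangle = 1$. This is the main obstacle: in a Hilbert $C^*$-module, $\|e_i\|=1$ only tells us $\|\langle e_i,e_i\rangle\| = 1$, and $\langle e_i,e_i\rangle$ is a positive element of norm one, generally not the unit. So the naive cross terms will involve $\langle e_i, e_i\rangle$ rather than $1$, and the telescoping that gives the clean Bessel bound can fail. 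The way around this, and what I expect the author does, is to pass to norms early: expand $\langle y,y\rangle$, observe all the mixed $e_i$–$e_j$ ($i\neq j$) terms vanish, collect the diagonal terms, and then apply the $C^*$-norm together with $\|\langle e_k,e_k\rangle\| = 1$ and the operator inequality $b^* \langle e_k,e_k\rangle b \leq \|\langle e_k,e_k\rangle\| b^* b = b^* b$ for $b = \langle e_k,x\rangle$.

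Concretely, the key step is the inequality $\langle e_k, x\rangle^* \langle e_k, e_k\rangle \langle e_k, x\rangle \leq \langle e_k,x\rangle^*\langle e_k,x\rangle = |\langle e_k,x\rangle|^2$, valid because $\langle e_k,e_k\rangle \leq \|\langle e_k,e_k\rangle\|\cdot 1 = 1$ and conjugation by an element preserves the order. Summing the expansion of $0 \leq \langle y,y\rangle$ and rearranging yields $\sum_{k=1}^n |\langle e_k,x\rangle|^2 \leq \langle x,x\rangle = |x|^2$ in the order of $\mathfrak{A}$ (one has to be a little careful that the "$\leq$" lands the right way after moving terms across, since $\mathfrak{A}$-valued inequalities do not subtract freely — here I would instead argue directly that $\langle x,x\rangle - \sum_k |\langle e_k,x\rangle|^2$ equals $\langle y,y\rangle$ plus a manifestly positive correction coming from the gap $|\langle e_k,x\rangle|^2 - \langle e_k,x\rangle^*\langle e_k,e_k\rangle\langle e_k,x\rangle \geq 0$, hence is positive). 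The second inequality $|x|^2 \geq \sum_{k=1}^n |\langle x,e_k\rangle|^2$ follows by the symmetric argument, replacing $e_k b_k$ with the appropriate combination and using $\langle x,e_k\rangle = \langle e_k,x\rangle^*$ together with $(\diamond)$-style identities, or simply by applying the first inequality to the conjugate inner product structure; either way no new idea is needed once the diagonal-term estimate is in place.
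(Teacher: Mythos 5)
Your proof of the first inequality is essentially the paper's own argument: expand $0\le \bigl|x-\sum_{k=1}^n e_k\langle e_k,x\rangle\bigr|^2$ (only the right action is needed for this), kill the cross terms by orthogonality, and control the diagonal terms via $\langle e_k,x\rangle^*\langle e_k,e_k\rangle\langle e_k,x\rangle\le \langle e_k,x\rangle^*\langle e_k,x\rangle$, which follows from $\langle e_k,e_k\rangle\le\|e_k\|^2\,1=1$. Your caution about ``moving terms across'' an $\mathfrak{A}$-valued inequality is unnecessary --- the order on self-adjoint elements is translation invariant, so the chain $0\le|y|^2\le|x|^2-\sum_k|\langle e_k,x\rangle|^2$ is legitimate as written --- but your repaired version is the same computation, so this half matches the paper.

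The gap is in the second inequality. You decline to use the left $\mathfrak{A}$-action on the grounds that the lemma precedes the standing hypothesis $(\dagger)$; in fact the lemma appears \emph{after} the paper imposes $(\dagger)$ (``In the next results of this section, we assume\dots''), and the paper's proof of the second inequality genuinely uses it: it expands $\bigl|x-\sum_{k=1}^n\langle e_k,x\rangle e_k\bigr|^2$, where the element $\langle e_k,x\rangle e_k$ only makes sense via the left action. Your proposed substitutes --- a ``symmetric argument'' built from right-action combinations, or ``applying the first inequality to the conjugate inner product structure'' --- cannot be made to work, because the second inequality is false in a bare right Hilbert $C^*$-module. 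Take $\mathfrak{X}=\mathfrak{A}=M_2(\mathbb{C})$ with $\langle x,y\rangle=x^*y$, $n=1$, and (in terms of matrix units) $e_1=E_{12}+E_{21}$, $x=E_{11}$; then $\|e_1\|=1$, $|x|^2=E_{11}$, but $|\langle x,e_1\rangle|^2=e_1^*xx^*e_1=E_{22}$, and $E_{11}-E_{22}$ is not positive. So the second half of the lemma really does require $(\dagger)$ (a strong, commutativity-type hypothesis, as the paper's own example of a commutative ideal suggests), and any correct proof must invoke it; your write-up as it stands does not prove that half.
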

\begin{proof}
The first result follows from the following inequality:
$$\begin{array}{ll}
0 \leq |x-\sum_{k=1}^ne_k\langle e_k,x\rangle|^2&=\langle
x-\sum_{k=1}^n e_k\langle
e_k,x\rangle ,x-\sum_{j=1}^n e_j\langle e_j,x\rangle \rangle\\
&=\langle x,x\rangle+\sum_{k=1}^n\sum_{j=1}^n\langle
e_k,x\rangle^*\langle
e_k,e_j\rangle\langle e_j,x\rangle-2\sum_{k=1}^n|\langle e_k,x\rangle|^2\\
&=\langle x,x\rangle+\sum_{k=1}^n\langle e_k,x\rangle^*\langle
e_k,e_k\rangle\langle e_k,x\rangle-2\sum_{k=1}^n|\langle e_k,x\rangle|^2\\
&\leq |x|^2+\sum_{k=1}^n\langle e_k,x\rangle^*\langle
e_k,x\rangle-2\sum_{k=1}^n|\langle
e_k,x\rangle|^2\\
&=|x|^2-\sum_{k=1}^n|\langle
e_k,x\rangle|^2\,.\\
\end{array}$$
By considering $|x-\sum_{k=1}^n\langle e_k,x\rangle e_k|^2$,
similarly, we get the second one.
\end{proof}
Now we are able to prove the next theorem without using the
Cauchy--Schwarz inequality.

\begin{theorem}\label{2.5}
Let  $e_1, \cdots, e_m\in {\mathfrak X}$ be a family of vectors with
$\langle e_i,e_j\rangle=0\,\, (1\leq i\neq j\leq m)$ and
$\|e_i\|=1\,\,(1\leq i \leq m)$. If the vectors $x_1, \cdots, x_n\in
{\mathfrak X}$ satisfy the conditions
\begin{eqnarray}\label{azar}
0\leq r_k \|x_j\|\leq {Re}\langle e_k,x_j\rangle\,,\quad 0\leq
\rho_k \|x_j\| \leq {Im}\langle e_k,x_j\rangle\quad (1\leq j\leq
n, 1\leq k\leq m)\,,
\end{eqnarray} where $r_k,\rho_k\in [0, \infty)\,\,(1\leq k\leq
m)$, then
\begin{eqnarray}\label{2}
\left[\sum_{k=1}^m(r_k^2+\rho_k^2)\right]^{\frac{1}{2}}\sum_{j=1}^n
\|x_j\|\leq\left|\sum_{j=1}^nx_j\right|\,.
\end{eqnarray}
\end{theorem}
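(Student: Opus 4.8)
The plan is to imitate the proof of the previous theorem (inequality \eqref{ine1'}) but to keep everything on the level of the ${\mathfrak A}$-valued norm $|\cdot|$ rather than passing to the scalar norm $\|\cdot\|$, which is exactly why the Cauchy--Schwarz inequality can be avoided: in place of $|\langle y,x\rangle|^2\le\|x\|^2|y|^2$ we will use the Bessel-type inequality from the lemma immediately preceding the theorem, namely $|x|^2\ge\sum_{k=1}^m|\langle e_k,x\rangle|^2$ and $|x|^2\ge\sum_{k=1}^m|\langle x,e_k\rangle|^2$. Since the $e_k$ are orthonormal this will give us $\left|\sum_{j=1}^n x_j\right|^2$ from below by a sum over $k$ of squared inner products, and the hypotheses \eqref{azar} will then convert each such term into $(r_k^2+\rho_k^2)\big(\sum_j\|x_j\|\big)^2$.

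In more detail, first I would set $S=\sum_{j=1}^n x_j$ and, for each fixed $k$, write $\langle e_k,S\rangle=\sum_{j=1}^n\langle e_k,x_j\rangle$, so that $\operatorname{Re}\langle e_k,S\rangle=\sum_j\operatorname{Re}\langle e_k,x_j\rangle\ge r_k\sum_j\|x_j\|$ and likewise $\operatorname{Im}\langle e_k,S\rangle\ge\rho_k\sum_j\|x_j\|$, both right-hand sides being nonnegative. Using the identity $(\diamond)$ applied to $a=\langle e_k,S\rangle$, we have
$$\tfrac12\big(\langle e_k,S\rangle^*\langle e_k,S\rangle+\langle e_k,S\rangle\langle e_k,S\rangle^*\big)=(\operatorname{Re}\langle e_k,S\rangle)^2+(\operatorname{Im}\langle e_k,S\rangle)^2\,,$$
and since both summands on the right are squares of nonnegative reals bounded below by $r_k\sum_j\|x_j\|$ and $\rho_k\sum_j\|x_j\|$ respectively, this quantity is $\ge(r_k^2+\rho_k^2)\big(\sum_j\|x_j\|\big)^2$ as a positive element of ${\mathfrak A}$ (here the numbers $(\operatorname{Re}\langle e_k,S\rangle)^2$ etc.\ are scalars in $\mathbb C\cdot 1$, so the ordering is just the numerical one). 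Summing over $k$ and invoking the lemma's two Bessel inequalities to absorb both the $aa^*$ and the $a^*a$ halves, we get
$$|S|^2\ \ge\ \tfrac12\sum_{k=1}^m\big(\langle e_k,S\rangle^*\langle e_k,S\rangle+\langle S,e_k\rangle^*\langle S,e_k\rangle\big)\ \ge\ \sum_{k=1}^m(r_k^2+\rho_k^2)\Big(\sum_{j=1}^n\|x_j\|\Big)^2\,,$$
where in the middle I use $\langle S,e_k\rangle=\langle e_k,S\rangle^*$ so that $\langle S,e_k\rangle^*\langle S,e_k\rangle=\langle e_k,S\rangle\langle e_k,S\rangle^*$.

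Finally, since $\sum_{k=1}^m(r_k^2+\rho_k^2)$ is a nonnegative real number and $\big(\sum_j\|x_j\|\big)^2$ is likewise a nonnegative scalar, the right-hand side is $\big[\sum_k(r_k^2+\rho_k^2)\big]\big(\sum_j\|x_j\|\big)^2\cdot 1$, a positive scalar multiple of the unit, so I may apply the operator monotonicity of the square root (quoted in the preliminaries: $a\le b\Rightarrow a^{1/2}\le b^{1/2}$) to conclude
$$\Big[\sum_{k=1}^m(r_k^2+\rho_k^2)\Big]^{1/2}\sum_{j=1}^n\|x_j\|\ \le\ |S|\ =\ \Big|\sum_{j=1}^n x_j\Big|\,,$$
which is \eqref{2}. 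The one point that needs care — and which I expect to be the main obstacle — is the step where each scalar inequality $0\le r_k\sum_j\|x_j\|\le\operatorname{Re}\langle e_k,S\rangle$ is squared inside the $C^*$-algebra: squaring preserves order for nonnegative reals, which is why the nonnegativity hypotheses in \eqref{azar} are essential, and one must make sure the cross term structure in $(\diamond)$ (which splits $|S|^2$ into an $aa^*$ piece and an $a^*a$ piece) is matched correctly against the two separate Bessel inequalities of the preceding lemma rather than using Cauchy--Schwarz.
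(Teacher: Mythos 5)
Your proof is correct and follows essentially the same route as the paper's: the Bessel-type lemma applied to $S=\sum_{j=1}^n x_j$, the identity $(\diamond)$ to convert $\frac{1}{2}(a^*a+aa^*)$ into $(\operatorname{Re}a)^2+(\operatorname{Im}a)^2$, the termwise lower bounds from (\ref{azar}), and operator monotonicity of the square root at the end. The only slip is describing $\operatorname{Re}\langle e_k,S\rangle$ as a nonnegative real --- it is a hermitian element of ${\mathfrak A}$ bounded below by the scalar $r_k\sum_{j}\|x_j\|\cdot 1$ --- but since a positive element dominating a nonnegative scalar multiple of the unit has square dominating the squared scalar, the squaring step (which the paper also performs without comment) goes through.
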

\begin{proof}
Applying the previous lemma for $x=\sum_{j=1}^nx_j$, we obtain
$$\begin{array}{ll}
\left|\sum_{j=1}^nx_j\right|^2&\geq\frac{1}{2}\left(\sum_{k=1}^m|\langle
e_k,\sum_{j=1}^nx_j\rangle|^2+\sum_{k=1}^m|\langle
\sum_{j=1}^nx_j,e_k\rangle|^2\right)\\
&=\sum_{k=1}^m\frac{1}{2}\left(\langle
e_k,\sum_{j=1}^nx_j\rangle^*\langle
e_k,\sum_{j=1}^nx_j\rangle+\langle
\sum_{j=1}^nx_j,e_k\rangle^*\langle
\sum_{j=1}^nx_j,e_k\rangle\right)\\
&=\sum_{k=1}^m (\mbox{Re}\langle e_k,\sum_{j=1}^nx_j\rangle)^2+
(\mbox{Im}\langle e_k,\sum_{j=1}^nx_j\rangle)^2 \qquad \big({\rm by~} (\diamond)\big)\\
&=\sum_{k=1}^m (\mbox{Re} \sum_{j=1}^n\langle e_k,x_j\rangle)^2+
(\mbox{Im} \sum_{j=1}^n\langle e_k,x_j\rangle)^2\\
&\geq\sum_{k=1}^m(r_k^2(\sum_{j=1}^n\|x_j\|)^2+\rho_k^2(
\sum_{j=1}^n\|x_j\|)^2) \qquad \big({\rm by~ (\ref{azar})}\big)\\
&=\sum_{k=1}^m(r_k^2+\rho_k^2)(\sum_{j=1}^n\|x_j\|)^2\,.
\end{array}$$
\end{proof}
\begin{proposition}
In Theorem {\rm \ref{2.5}}, if $\langle e_k,e_k\rangle=1$, then
the equality holds in {\rm (\ref{2})} if and only if
\begin{eqnarray}\label{3}
\sum_{j=1}^n x_j=(\sum_{j=1}^n\|x_j\|)\sum_{k=1}^m
(r_k+i\rho_k)e_k\,.
\end{eqnarray}
\end{proposition}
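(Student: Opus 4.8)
The plan is to prove the equivalence by establishing each implication separately. The easy direction is ``\eqref{3} $\Rightarrow$ equality in \eqref{2}'': assuming \eqref{3}, the inner-product identities together with $\langle e_i,e_j\rangle=0$ $(i\neq j)$ and the new hypothesis $\langle e_k,e_k\rangle=1$ give
\[
\Big|\sum_{j=1}^nx_j\Big|^2=\Big(\sum_{j=1}^n\|x_j\|\Big)^2\Big\langle\sum_{k=1}^m(r_k+i\rho_k)e_k,\sum_{l=1}^m(r_l+i\rho_l)e_l\Big\rangle=\Big(\sum_{j=1}^n\|x_j\|\Big)^2\sum_{k=1}^m(r_k^2+\rho_k^2)\,,
\]
and taking the positive square root of these commuting positive elements yields equality in \eqref{2}. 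If $\sum_{j}\|x_j\|=0$ or $\sum_k(r_k^2+\rho_k^2)=0$ there is nothing to prove, so from now on both quantities are assumed positive.

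For the converse, suppose equality holds in \eqref{2}. Since its left-hand side is a scalar multiple of $1$, squaring gives $\big|\sum_jx_j\big|^2=\big[\sum_k(r_k^2+\rho_k^2)\big]\big(\sum_j\|x_j\|\big)^2$, and hence every inequality appearing in the chain displayed in the proof of Theorem~\ref{2.5} is forced to be an equality. The elementary fact I would invoke repeatedly is that, in a $C^*$-algebra, if $0\le p\le c$, $0\le q\le c$ and $\frac12(p+q)=c$, then $p=q=c$: indeed $c-p=q-c$, while the left-hand side is $\ge0$ and the right-hand side is $\le0$. Applied to the two estimates furnished by the preceding lemma, this forces, with $x:=\sum_{j=1}^nx_j$,
\[
|x|^2=\sum_{k=1}^m|\langle e_k,x\rangle|^2=\sum_{k=1}^m|\langle x,e_k\rangle|^2\,.
\]

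At this point the hypothesis $\langle e_k,e_k\rangle=1$ enters decisively: the only inequality used in the proof of that lemma is $\langle e_k,x\rangle^*\langle e_k,e_k\rangle\langle e_k,x\rangle\le\langle e_k,x\rangle^*\langle e_k,x\rangle$, which is now an identity, so $|x|^2=\sum_k|\langle e_k,x\rangle|^2$ is equivalent to $\big|x-\sum_{k=1}^m e_k\langle e_k,x\rangle\big|^2=0$, i.e.\ $x=\sum_{k=1}^m e_k\langle e_k,x\rangle$. It remains to identify $\langle e_k,x\rangle$. Equality in the step of Theorem~\ref{2.5} that uses \eqref{azar}, read off one summand at a time, gives $\big(\mathrm{Re}\langle e_k,x\rangle\big)^2=r_k^2\big(\sum_j\|x_j\|\big)^2$ and $\big(\mathrm{Im}\langle e_k,x\rangle\big)^2=\rho_k^2\big(\sum_j\|x_j\|\big)^2$ for each $k$, while \eqref{azar} itself yields $\mathrm{Re}\langle e_k,x\rangle\ge r_k\sum_j\|x_j\|\ge0$ and likewise for the imaginary part. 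Here lies the single genuinely $C^*$-algebraic point, which I expect to be the main obstacle: from $a\ge b\ge0$ with $b$ a nonnegative scalar multiple of the unit and $a^2=b^2$ one gets $a=b$, since writing $a=b+d$ with $d\ge0$ gives $0=a^2-b^2=2bd+d^2$, a sum of positive elements, hence $d=0$. Consequently $\mathrm{Re}\langle e_k,x\rangle=r_k\sum_j\|x_j\|$ and $\mathrm{Im}\langle e_k,x\rangle=\rho_k\sum_j\|x_j\|$, so $\langle e_k,x\rangle=(r_k+i\rho_k)\sum_j\|x_j\|$; substituting into $x=\sum_{k=1}^m e_k\langle e_k,x\rangle$ and identifying scalars with their multiples of $1$ yields exactly \eqref{3}. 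Apart from this squaring lemma, the argument is just careful bookkeeping of which link in the Theorem~\ref{2.5} chain supplies which equality.
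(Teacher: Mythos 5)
Your proof is correct and follows essentially the same route as the paper's: the forward direction is the same direct computation, and for the converse both arguments force every inequality in the chain of Theorem \ref{2.5} to be an equality, use the equality case of the lemma (where $\langle e_k,e_k\rangle=1$ enters) to get $\sum_j x_j=\sum_k e_k\langle e_k,\sum_j x_j\rangle$, and then identify $\langle e_k,\sum_j x_j\rangle$ as $(r_k+i\rho_k)\sum_j\|x_j\|$. The only difference is that you supply the $C^*$-algebraic justifications the paper leaves implicit (the equality case of $\frac12(p+q)\le c$ and the fact that $a\ge b\ge 0$ with $b$ scalar and $a^2=b^2$ forces $a=b$), and you work with the summed quantity $\langle e_k,\sum_j x_j\rangle$ rather than extracting the per-$j$ equalities as the paper does; both variants reach \eqref{3} identically.
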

\begin{proof}
If (\ref{3}) holds, then inequality in (\ref{2}) turns trivially
into equality.

Next, assume that equality holds in (\ref{2}). Then two
inequalities in the proof of Theorem \ref{2.5} should be equality.
Hence
$$|\sum_{j=1}^n x_j|^2=\sum_{k=1}^m|\langle e_k,\sum_{j=1}^n x_j
\rangle|^2\quad \mbox{and}\quad|\sum_{j=1}^n
x_j|^2=\sum_{k=1}^m|\langle \sum_{j=1}^n x_j,e_k\rangle|^2\,,$$
which is equivalent to
$$\sum_{j=1}^n x_j=\sum_{k=1}^m\sum_{j=1}^ne_k\langle e_k,x_j\rangle
=\sum_{k=1}^m\sum_{j=1}^n\langle e_k,x_j\rangle e_k\,,$$ and also
$$r_k \|x_j\|=\mbox{Re}\langle e_k,x_j\rangle\,,\quad\rho_k \|x_j\|
=\mbox{Im}\langle e_k,x_j\rangle\,.$$ So
$$\sum_{j=1}^n x_j=\sum_{k=1}^m\sum_{j=1}^ne_k\langle e_k,x_j
\rangle=\sum_{k=1}^m\sum_{j=1}^ne_k(r_k+i\rho_k)\|x_j\|=
(\sum_{j=1}^n\|x_j\|)\sum_{k=1}^m (r_k+i\rho_k)e_k\,.$$
\end{proof}


\section{Additive reverse of the triangle inequality}

There are some versions of additive reverse of triangle inequality.
In \cite{drbook}, Dragomir established the following theorem:
\begin{theorem} Let $\{e_k\}_{k=1}^m$ be a family of orthonormal vectors in Hilbert
space $H$ and ${M_{jk}\geq0\,\,(1\leq j\leq n,1\leq k\leq m)}$
such that
$$\|x_j\|-\mbox{Re}\langle e_k,x_j\rangle\leq M_{jk}\,,$$
for each $1\leq j\leq n$ and $1\leq k\leq m$. Then
$$\sum_{j=1}^n\|x_j\|\leq\frac{1}{\sqrt{m}}\|
\sum_{j=1}^nx_j\|+\frac{1}{m} \sum_{j=1}^n\sum_{k=1}^m M_{jk}\,;$$
and the equality holds if and only if
$$\sum_{j=1}^n\|x_j\|\geq\frac{1}{m}\sum_{j=1}^n\sum_{k=1}^m M_{jk}\,,$$
and
$$\sum_{j=1}^nx_j=\left(\sum_{j=1}^n\|x_j\|-\frac{1}{m}
\sum_{j=1}^n\sum_{k=1}^m M_{jk}\right)\sum_{k=1}^me_k\,.$$
\end{theorem}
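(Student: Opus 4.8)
The plan is to get the inequality from a single use of the Cauchy--Schwarz inequality after summing the hypotheses, and then to read off the equality case by tracking where that chain is tight. Put $x:=\sum_{j=1}^n x_j$. First I would sum the assumption $\|x_j\|-\mbox{Re}\langle e_k,x_j\rangle\leq M_{jk}$ over $j$, which for each fixed $k$ gives
$$\sum_{j=1}^n\|x_j\|-\mbox{Re}\langle e_k,x\rangle\leq\sum_{j=1}^n M_{jk}\,.$$
Summing these $m$ inequalities over $k$ and using additivity of $\langle e_k,\cdot\rangle$ and of $\mbox{Re}$ yields
$$m\sum_{j=1}^n\|x_j\|\leq\mbox{Re}\Big\langle\sum_{k=1}^m e_k,x\Big\rangle+\sum_{j=1}^n\sum_{k=1}^m M_{jk}\,.$$
Since the $e_k$ are orthonormal, $\big\|\sum_{k=1}^m e_k\big\|^2=\sum_{k=1}^m\|e_k\|^2=m$, so $\mbox{Re}\langle\sum_k e_k,x\rangle\leq|\langle\sum_k e_k,x\rangle|\leq\sqrt m\,\|x\|$ by Cauchy--Schwarz. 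Dividing by $m$ gives the asserted bound.

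For the equality assertion, the ``if'' direction is a direct computation: set $\lambda:=\sum_{j=1}^n\|x_j\|-\frac1m\sum_{j,k}M_{jk}$; if $\lambda\geq0$ and $\sum_j x_j=\lambda\sum_k e_k$, then $\|x\|=\lambda\sqrt m$, and the right-hand side of the inequality equals $\frac1{\sqrt m}\cdot\lambda\sqrt m+\frac1m\sum_{j,k}M_{jk}=\sum_j\|x_j\|$, so equality holds.

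For the ``only if'' direction I would use that equality in the final bound forces equality at each step of the chain above. Equality when summing the hypotheses, together with the termwise bounds $\|x_j\|-\mbox{Re}\langle e_k,x_j\rangle\leq M_{jk}$, forces $\|x_j\|-\mbox{Re}\langle e_k,x_j\rangle=M_{jk}$ for all $j,k$. Equality in $\mbox{Re}\langle\sum_k e_k,x\rangle\leq\sqrt m\,\|x\|$ forces both $\langle\sum_k e_k,x\rangle$ real and nonnegative and, by the equality case of the Cauchy--Schwarz inequality (Lemma~\ref{Cauchy}), $x=\lambda\sum_k e_k$ for some scalar $\lambda$; combining these gives $\lambda\geq0$ (the degenerate case $x=0$ being subsumed, with $\lambda=0$). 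It remains to identify $\lambda$: from $x=\lambda\sum_l e_l$ and orthonormality one has $\mbox{Re}\langle e_k,x\rangle=\lambda$ for every $k$, so summing the termwise equalities over $j$ gives $\sum_j\|x_j\|-\lambda=\sum_j M_{jk}$ for each $k$; summing over $k$ yields $\lambda=\sum_j\|x_j\|-\frac1m\sum_{j,k}M_{jk}$. Together with $\lambda\geq0$, these are exactly the two stated conditions.

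There is no serious analytic difficulty; the delicate point is entirely in the equality analysis. One must notice that equality in a sum of the individual hypotheses propagates to each of them, and --- the key step --- that once $x$ is known to be a multiple of $\sum_k e_k$, the quantity $\mbox{Re}\langle e_k,x\rangle$ is the same for all $k$, which is precisely what makes it possible to solve for that multiple and match it to $\sum_j\|x_j\|-\frac1m\sum_{j,k}M_{jk}$.
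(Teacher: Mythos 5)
Your proof is correct and follows essentially the same route as the paper's own argument (given there for the Hilbert $C^*$-module generalization of this statement): sum the hypothesis over $j$ and then over $k$, bound $\mathrm{Re}\langle\sum_{k=1}^m e_k,\sum_{j=1}^n x_j\rangle$ by $\sqrt{m}\,\|\sum_{j=1}^n x_j\|$ via $\|\sum_{k=1}^m e_k\|^2=m$ and Cauchy--Schwarz, and extract the equality case by forcing each link in the chain to be tight, using the Cauchy--Schwarz equality case (Lemma~\ref{Cauchy}) to write $\sum_{j=1}^n x_j$ as a nonnegative multiple of $\sum_{k=1}^m e_k$. No gaps; the identification of the scalar multiple matches the paper's computation.
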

We can prove this theorem for Hilbert $C^*$-modules with some
different techniques.
\begin{theorem}
Let  $\{e_k\}_{k=1}^m$ be a family of vectors in Hilbert module
${\mathfrak X}$ over a $C^*$-algebra ${\mathfrak A}$ with unit $1$
with $|e_k|\leq 1 \,\,(1\leq k \leq m)$ and $\langle e_i,
e_j\rangle=0\,\,(1\leq i\neq j\leq m)$ and $x_j\in{\mathfrak
X}\,\, (1\leq j \leq n)$. If for some scalars
$M_{jk}\geq0\,\,(1\leq j\leq n$, $1\leq k\leq m)$,
\begin{eqnarray}\label{azar2}
\|x_j\|-\mbox{Re}\langle e_k,x_j\rangle\leq M_{jk} \qquad (1\leq
j\leq n, 1\leq k\leq m)\,,
\end{eqnarray}
then
\begin{eqnarray}\label{add}\sum_{j=1}^n\|x_j\|\leq\frac{1}{\sqrt{m}}\|
\sum_{j=1}^nx_j\|+\frac{1}{m} \sum_{j=1}^n\sum_{k=1}^m M_{jk}\,.
\end{eqnarray}
Moreover, if $|e_k|=1 \,\,(1\leq k \leq m)$, then the equality in
{\rm (\ref{add})} holds if and only if
\begin{eqnarray}\label{add1}
\sum_{j=1}^n\|x_j\|\geq\frac{1}{m}\sum_{j=1}^n \sum_{k=1}^m
M_{jk}\,,
\end{eqnarray}
and
\begin{eqnarray}\label{add2}\sum_{j=1}^nx_j=\left(\sum_{j=1}^n\|x_j\|-\frac{1}{m}
\sum_{j=1}^n\sum_{k=1}^m
M_{jk}\right)\sum_{k=1}^me_k\,.\end{eqnarray}
\end{theorem}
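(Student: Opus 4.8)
The plan is to mimic the Hilbert space argument of Dragomir, but carried out with the $C^*$-valued inner product and the key inequality $|\langle e_k,x\rangle|\le\|e_k\|\,|x|\le|x|$ already established by the Cauchy--Schwarz inequality in Lance's book, together with the orthogonality relations $\langle e_i,e_j\rangle=0$ for $i\ne j$. First I would fix $x=\sum_{j=1}^n x_j$ and sum the defining condition $(\ref{azar2})$ over $j$ to get $\sum_{j=1}^n\|x_j\|-\operatorname{Re}\langle e_k,x\rangle\le\sum_{j=1}^n M_{jk}$ for each $k$, hence $m\sum_{j=1}^n\|x_j\|-\operatorname{Re}\langle\sum_{k=1}^m e_k,x\rangle\le\sum_{j=1}^n\sum_{k=1}^m M_{jk}$ after summing over $k$. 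Writing $S=\sum_{k=1}^m e_k$ this reads $m\sum_j\|x_j\|\le\operatorname{Re}\langle S,x\rangle+\sum_{j,k}M_{jk}$.

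Next I would bound $\operatorname{Re}\langle S,x\rangle\le|\langle S,x\rangle|\le\|S\|\,\|x\|$ by Cauchy--Schwarz, and then compute $\|S\|$ using orthogonality: since $\langle e_i,e_j\rangle=0$ for $i\ne j$, we have $\langle S,S\rangle=\sum_{k=1}^m\langle e_k,e_k\rangle$, so $\|S\|^2=\big\|\sum_{k=1}^m\langle e_k,e_k\rangle\big\|\le\sum_{k=1}^m\|\langle e_k,e_k\rangle\|=\sum_{k=1}^m\|e_k\|^2\le m$, where I use positivity of each $\langle e_k,e_k\rangle$ (so the norm of the sum is at most the sum of the norms) and $|e_k|\le1$. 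Therefore $\|S\|\le\sqrt m$, and substituting gives $m\sum_j\|x_j\|\le\sqrt m\,\|x\|+\sum_{j,k}M_{jk}$; dividing by $m$ yields $(\ref{add})$.

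For the equality case, assuming $|e_k|=1$ for all $k$ (so $\|S\|^2=\|\sum_k\langle e_k,e_k\rangle\|$, and I would note $\langle e_k,e_k\rangle$ are orthogonal positive elements forcing $\|S\|^2=m$ exactly — this is the point that needs $|e_k|=1$ rather than $\le1$, and is the subtlest step, since in a general $C^*$-algebra one must argue that the norm of a sum of mutually orthogonal positive elements equals the max of their norms, here all equal to $1$). Tracing the chain, equality in $(\ref{add})$ forces equality in both $\operatorname{Re}\langle S,x\rangle=|\langle S,x\rangle|$ and $|\langle S,x\rangle|=\|S\|\,\|x\|=\sqrt m\,\|x\|$, as well as equality in each summed instance of $(\ref{azar2})$, i.e. $\sum_j\|x_j\|-\operatorname{Re}\langle e_k,x\rangle=\sum_j M_{jk}$ for every $k$. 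From $|\langle S,x\rangle|=\|S\|\,\|x\|$ and Lemma \ref{Cauchy} (with the roles of $x,y$ there taken as $S,x$) I get $x=\dfrac{S\langle S,x\rangle}{\|S\|^2}=\dfrac{S\operatorname{Re}\langle S,x\rangle}{m}$, and from the equality $\operatorname{Re}\langle S,x\rangle=\sum_k\operatorname{Re}\langle e_k,x\rangle=m\sum_j\|x_j\|-\sum_{j,k}M_{jk}$ this becomes $x=\big(\sum_j\|x_j\|-\tfrac1m\sum_{j,k}M_{jk}\big)S$, which is $(\ref{add2})$; the inequality $(\ref{add1})$ then drops out because $x=\|x\|S/\sqrt m\cdot(\text{sign})$ forces the scalar coefficient to be $\ge0$ (equivalently, $\operatorname{Re}\langle S,x\rangle\ge0$). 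Conversely, if $(\ref{add1})$ and $(\ref{add2})$ hold, a direct computation of $\|x\|=\big(\sum_j\|x_j\|-\tfrac1m\sum_{j,k}M_{jk}\big)\|S\|=\sqrt m\big(\sum_j\|x_j\|-\tfrac1m\sum_{j,k}M_{jk}\big)$ shows $(\ref{add})$ is an equality. I expect the main obstacle to be the equality analysis in the $C^*$-setting: ensuring $\|S\|=\sqrt m$ exactly and correctly extracting the real-part conditions, since unlike the scalar case the inner product values are noncommuting algebra elements and one must lean on their positivity and mutual orthogonality rather than on manipulations valid only for numbers.
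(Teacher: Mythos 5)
Your overall strategy is the same as the paper's: sum the hypothesis over $j$ and then over $k$, bound $\mbox{Re}\langle \sum_k e_k,\sum_j x_j\rangle$ by $\sqrt{m}\,\|\sum_j x_j\|$ using Cauchy--Schwarz together with $\|\sum_k e_k\|^2\le m$, and settle the equality case via Lemma \ref{Cauchy}. But one step fails as written: the inequality $\mbox{Re}\,a\le |a|$ (with $|a|=(a^*a)^{1/2}$) is \emph{false} for general elements of a noncommutative $C^*$-algebra. For instance, for $a=\left(\begin{smallmatrix}0&0\\1&0\end{smallmatrix}\right)$ in $M_2(\mathbb{C})$ one has $|a|=\mathrm{diag}(1,0)$ while $\mbox{Re}\,a$ has off-diagonal entries $\tfrac12$, and $|a|-\mbox{Re}\,a$ has negative determinant. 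Since $\langle S,x\rangle$ is an algebra element here and your left-hand side $m\sum_j\|x_j\|$ is a scalar multiple of the unit, you genuinely need an inequality in the order of ${\mathfrak A}$, so this link in the chain must be replaced. The paper does this by squaring: $(\mbox{Re}\,a)^2\le\tfrac12(aa^*+a^*a)$ from $(\diamond)$, then two applications of the module Cauchy--Schwarz inequality and operator monotonicity of the square root; alternatively the elementary bound $\mbox{Re}\,a\le\|a\|1\le\|S\|\,\|x\|1$ does the job directly. Correspondingly, the equality condition you should extract is $\mbox{Re}\langle S,x\rangle=\langle S,x\rangle$ together with $|\langle S,x\rangle|=\|S\|\,\|x\|$ (which is what Lemma \ref{Cauchy} needs), rather than ``$\mbox{Re}\langle S,x\rangle=|\langle S,x\rangle|$''.

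A second, smaller problem is your justification that $\|\sum_k e_k\|^2=m$ when $|e_k|=1$. The positive elements $\langle e_k,e_k\rangle$ are \emph{not} mutually orthogonal in ${\mathfrak A}$ (orthogonality of the vectors $e_k$ means $\langle e_i,e_j\rangle=0$ for $i\ne j$, not that the diagonal inner products annihilate one another), and a ``maximum of the norms'' count would give $1$, not $m$. The correct and much simpler reason is that $|e_k|=1$ forces $\langle e_k,e_k\rangle=1$, whence $\langle S,S\rangle=\sum_{k=1}^m\langle e_k,e_k\rangle=m\cdot 1$ and $\|S\|^2=m$ exactly (and for the inequality itself, $|e_k|\le 1$ gives $\sum_k|e_k|^2\le m\cdot 1$, so $\|S\|^2\le m$, exactly as in the paper). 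With these two local repairs your argument coincides with the paper's proof, including the converse computation for the equality case.
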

\begin{proof}
Taking the summation in (\ref{azar2}) over $j$ from 1 to $n$, we
obtain
$$\sum_{j=1}^n\|x_j\|\leq\mbox{Re}\langle e_k,\sum_{j=1}^nx_j\rangle
+\sum_{j=1}^n M_{jk}\,,$$ for each $k\in\{1,\cdots,m\}$. Summing
these inequalities over $k$ from 1 to $m$, we deduce
\begin{eqnarray}\label{2.9}\sum_{j=1}^n\|x_j\|\leq\frac{1}{m}\mbox{Re}\langle
\sum_{k=1}^me_k, \sum_{j=1}^nx_j\rangle+\frac{1}{m}
\sum_{k=1}^m\sum_{j=1}^n M_{jk}\,.\end{eqnarray} Using the
Cauchy--Schwarz we obtain
\begin{eqnarray}\label{2.10}
\begin{array}{ll}
\Big(\mbox{Re}\langle \sum_{k=1}^me_k,
\sum_{j=1}^nx_j\rangle\Big)^2&\leq \frac{1}{2}(|\langle
\sum_{k=1}^me_k, \sum_{j=1}^nx_j\rangle|^2+
|\langle \sum_{k=1}^me_k, \sum_{j=1}^nx_j\rangle^*|^2)\quad \big({\rm by~} (\diamond)\big)\\
&\leq\frac{1}{2}(\|\sum_{k=1}^me_k\|^2 |\sum_{j=1}^nx_j|^2+
|\sum_{k=1}^me_k|^2\| \sum_{j=1}^nx_j\|^2)\\
&\leq\|\sum_{k=1}^me_k\|^2\| \sum_{j=1}^nx_j\|^2\\
&\leq m\| \sum_{j=1}^nx_j\|^2\,,
\end{array}
\end{eqnarray}
since
$$\|\sum_{k=1}^me_k\|^2=\|\langle\sum_{k=1}^me_k,\sum_{k=1}^me_k\rangle\|=
\|\sum_{k=1}^m\sum_{l=1}^m\langle
e_k,e_l\rangle\|=\|\sum_{k=1}^m|e_k|^2\|\leq m\,.$$ Using
(\ref{2.10}) in (\ref{2.9}), we deduce the desired inequality.

If (\ref{add1}) and (\ref{add2}) hold, then
$$\begin{array}{ll}
\frac{1}{\sqrt{m}}\left\|\sum_{j=1}^nx_j\right\|&=\frac{1}{\sqrt{m}}\left(
\sum_{j=1}^n\|x_j\|
-\frac{1}{m}\sum_{j=1}^n\sum_{k=1}^mM_{jk}\right)
\|\sum_{k=1}^me_k\|\\
&=\sum_{j=1}^n\|x_j\|-\frac{1}{m}\sum_{j=1}^n
\sum_{k=1}^mM_{jk}\,,
\end{array}$$
and then the equality in \ref{add} holds true.

Conversely, if the equality holds in (\ref{add}), then obviously
(\ref{add1}) is valid and we have equalities all over in the proof
above. This means that
$$\|x_j\|-\mbox{Re}\langle e_k,x_j\rangle= M_{jk}\,,$$
$$\mbox{Re}\langle \sum_{k=1}^me_k, \sum_{j=1}^nx_j\rangle=\langle
\sum_{k=1}^me_k,\sum_{j=1}^nx_j\rangle\,,$$ and
$$|\langle \sum_{k=1}^me_k,
\sum_{j=1}^nx_j\rangle|=\|\sum_{k=1}^me_k\| \|\sum_{j=1}^nx_j\|\,.$$
It follows from Lemma \ref{Cauchy} and the previous relations that
$$\begin{array}{ll}
\sum_{j=1}^nx_j&=\frac{\sum_{k=1}^me_k}{\|\sum_{k=1}^me_k\|^2}
\langle \sum_{k=1}^me_k,\sum_{j=1}^nx_j\rangle\\[2mm]
&=\frac{\sum_{k=1}^me_k}{m}\mbox{Re}
\langle \sum_{k=1}^me_k,\sum_{j=1}^nx_j\rangle\\[2mm]
&=\frac{\sum_{k=1}^me_k}{m}\sum_{k=1}^m\sum_{j=1}^n(\|x_j\|-M_{jk})\\[2mm]
&=\left(\sum_{j=1}^n\|x_j\|-\frac{1}{m} \sum_{j=1}^n\sum_{k=1}^m
M_{jk}\right)\sum_{k=1}^me_k\,.
\end{array}$$
\end{proof}


\end{document}